\newcommand{\R}{\mathbf{R}}
\newcommand{\ex}{\mathbf{E}}
\newcommand{\rk}{\textrm{rk}}
\newcommand{\ind}{\mathbf{1}}
\newcommand{\pg}[1]{#1}
\theoremstyle{plain}
\newtheorem{theorem}{Theorem}
\newtheorem{proposition}{Proposition}
\theoremstyle{definition}
\newtheorem{definition}{Definition}
\newtheorem{remark}{Remark}
\theoremstyle{remark}
\newcommand{\formula}[2][nolabel]
{\ifthenelse{\equal{#1}{nolabel}}
 {\begin{align*} #2 \end{align*}}
 {\ifthenelse{\equal{#1}{}}
  {\begin{align} #2 \end{align}}
  {\begin{align} \label{#1} #2 \end{align}}
 }
}
\numberwithin{equation}{section}
\begin{document}

%
%

\title [On squared Bessel particle systems]{On squared Bessel particle systems}
\thanks{Jacek Ma{\l}ecki was supported by NCN grant no. 2013/11/D/ST1/02622}
\subjclass[2010]{{60J60, 60H15}}
\keywords{particle system, squared Bessel process, stochastic differential equation, Wishart process, noncolliding solution}
\author{Piotr Graczyk, Jacek Ma{\l}ecki}
\address{Piotr Graczyk \\ LAREMA \\ Universit\'e d'Angers \\ 2 Bd Lavoisier \\ 49045 Angers cedex 1, France}
\email{piotr.graczyk@univ-angers.fr}
\address{  Jacek Ma{\l}ecki,  \\  Faculty of Pure and Applied Mathematics\\ Wroc{\l}aw University of Science and Technology \\ ul. Wybrze{\.z}e Wyspia{\'n}\-skiego 27 \\ 50-370 Wroc{\l}aw, Poland}
\email{jacek.malecki@pwr.edu.pl }

\begin{abstract}
We study the existence and uniqueness of SDEs describing squared Bessel particles systems in full generality. We define non-negative and non-colliding squared Bessel particle systems and we study their properties. 
\end{abstract}
\maketitle
%
%
\section{Introduction}

The main \pg{objective}  of the paper is to study in details the following system of stochastic differential equations
\formula[eq:BESQp:SDE]{
  d X_i &= 2\sqrt{|X_i|}dB_i + \left(\alpha+\sum_{j\neq i}\frac{|X_i|+|X_j|}{X_i-X_j}\ind_{\{X_i\neq X_j\}}\right)dt\/,\quad i=1,\ldots,p\\
	&X_1(t)\leq X_2(t)\leq\ldots\leq X_p(t),\quad\quad t\geq 0\/, 
}	
with the initial condition $X_i(0)=x_i$, $i=1,\ldots, p$ and $\alpha\in\R$. The system \eqref{eq:BESQp:SDE} is called \textit{squared Bessel particle system} following the fact that for $p=1$ it reduces to the classical squared Bessel stochastic differential equation
\formula[eq:BESQ:SDE]{
   dX = 2\sqrt{|X|}dB+\alpha dt\/,\quad X(0)=x\/.
}
It follows from the Yamada-Watanabe theorem \cite{bib:yw71} that there exists a unique strong solution to \eqref{eq:BESQ:SDE} and the solution is called squared Bessel process of dimension $\alpha$ starting from $x$. It is usually denoted by $BESQ^{(\alpha)}(x)$. In the classical setting the non-negativity of $\alpha$ and $x$ are assumed. However, G\"oing-Jaeschke and Yor studied squared Bessel processes starting from negative points as well as having negative dimensions (see \cite{bib:gyj}), that play an important role in the stochastic calculus in one dimension. The present paper generalizes the G\"oing-Jaeschke-Yor's description of squared Bessel processes to the multidimensional case. 

On the other hand, by \cite[Theorem 3]{bib:gm1}, the system \eqref{eq:BESQp:SDE}  describes the ordered eigenvalues of the solution to the following matrix stochastic differential equation
\formula[eq:Wishart:SDE]{
   d\mathbb{Y}_t = \sqrt{|\mathbb{Y}_t|}d \mathbb{W}_t+d \mathbb{W}^T_t\sqrt{|\mathbb{Y}_t|}+\alpha \mathbb{I}dt\/,
	}
where $\mathbb{Y}_t\in {\mathcal S}_p$, the vector space of real symmetric matrices, 
$ \mathbb{W}_t$ is a Brownian $p\times p$ matrix and the eigenvalues of $ \mathbb{Y}_0$ are all different.
The equation \eqref{eq:Wishart:SDE} is usually considered with the additional assumption $\alpha \geq p-1$ (which for $p=1$ corresponds to the condition $\alpha\geq 0$),  and then it is called 
 \textit{Wishart SDE}, which can be viewed as the matrix generalization of the squared Bessel SDE \eqref{eq:BESQ:SDE} (see \cite{bib:b91,bib:donatiyor,bib:gm1}). If ${\alpha \geq p-1}$ and   the eigenvalues $X_1(0),\ldots,X_p(0)$  of $ \mathbb{Y}_0$ are supposed to be non-negative, then the particles $X_i(t)$ (\pg{i.e. the} eigenvalues of $\mathbb{Y}_t$) remain non-negative (i.e. $X_1(t)\geq 0$ for every $t\geq 0$) and they never collide for $t>0$.  In fact in this case we can remove absolute values and the indicators from \eqref{eq:BESQp:SDE} and \eqref{eq:Wishart:SDE}. However, the matrix equations  \eqref{eq:Wishart:SDE} are also considered without any restrictions on $\alpha$ and the behaviour of their eigenvalues for $\alpha<p-1$ is of great importance (see \cite{bib:gmm:2017}).

Since the squared Bessel particle systems  \eqref{eq:BESQp:SDE} were not studied for arbitrary $\alpha\in\R$ and $X(0)$, we provide results on the existence, unicity and properties of the solutions of the system \eqref{eq:BESQp:SDE} in the whole generality of its parameters and initial values. This general approach forces us to deal with some special values of $\alpha\in\R$ and $X(0)$ for which the unicity of solutions does not hold. It makes the study much more complicated than in the one dimensional case studied in \cite{bib:gyj}. Our results are partially based on the theory built in \cite{bib:gm2}, which allows to construct non-colliding solutions to general particle systems. However, there are some special cases of $\alpha$ and starting points $X(0)$ in  \eqref{eq:BESQp:SDE}, for which the results of  \cite{bib:gm2} cannot be applied directly. These cases require more in-depth analysis.

\pg{Systems of stochastic differential  equations with the indicators $\ind_{\{X_i\neq X_j\}}$ in the drift part
were introduced by Katori (\cite[Theorem 1]{bib:katori2004}, \cite{bib:katori2016}), but he uniquely considered  cases when
one can omit these indicators.}	
	

Note that the results obtained for the solutions of  the system \eqref{eq:BESQp:SDE} may be generalized for the $\beta$-BESQ particle systems, obtained by multiplying  the drift term in  \eqref{eq:BESQp:SDE} by a $\beta>1$, see \cite[Section III.D]{bib:gm1}. When $\beta=2$, these are the SDEs for $p$ independent BESQ processes on $\R^+$, conditioned not to collide (\cite{koc}). Such $\beta$-generalization of the present study will be done in the upcoming paper. 

The paper is organized as follows. We begin with introducing definitions and notations for non-colliding and non-negative solutions of \eqref{eq:BESQp:SDE} together with the results on their existence and uniqueness (Theorems \ref{thm:SDEs:NC:unicity} and \ref{thm:SDEs:Nonnegative}). In Theorem \ref{thm:SDEs:general} we give necessary and sufficient conditions on parameters of squared Bessel particle system to have a unique strong solution. In Section \ref{sec:poly} we study in details the stochastic description of the symmetric polynomials related to the non-negative solutions, which are used in the next section, where the proofs of the main results are provided. Finally, in Section \ref{sec:structure} we describe the structure of non-colliding solutions.


\section{Existence and uniqueness of solutions of BESQ particle system}

We start our considerations with studying so-called non-colliding solutions.

\begin{definition}
  A solution $(X_1,\ldots,X_p)$ of \eqref{eq:BESQp:SDE} is called \textbf{non-colliding} if there are no collisions between particles after the start, i.e. 
	\begin{eqnarray*}
	   T = \inf\{t>0: X_i(t)=X_j(t)\textrm{ for some $i\neq j$}\}
	\end{eqnarray*}
	is infinite almost surely. 
\end{definition} 

It appears that we can always build a non-colliding solution of \eqref{eq:BESQp:SDE} and  uniqueness among non-colliding solutions \pg{holds}, which is provided in the following

\begin{theorem}
\label{thm:SDEs:NC:unicity}
For every $\alpha\in\R$ and $x_1\leq \ldots\leq x_p$ there exists a unique \textbf{non-colliding} strong solution to the system of stochastic differential equations
\formula[eq:SDE:BESQnc]{
  d X_i &= 2\sqrt{|X_i|}dB_i + \left(\alpha+\sum_{j\neq i}\frac{|X_i|+|X_j|}{X_i-X_j}\ind_{\{X_i\neq X_j\}}\right)dt\/,\quad i=1,\ldots,p\\
	&X_1(t)\leq X_2(t)\leq\ldots\leq X_p(t)\quad t\geq 0
}
with the initial condition $X_i(0)=x_i$ for $i=1,\ldots,p$.
\end{theorem}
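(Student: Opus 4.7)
The plan is to split the theorem into existence and uniqueness, and in each part to isolate the ``regular'' initial configurations (strictly ordered $x_1<\ldots<x_p$ with none equal to $0$) from the degenerate ones, which I would handle by approximation.

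For existence in the regular case I would quote the general construction of \cite{bib:gm2}: the drift $\alpha+\sum_{j\neq i}\frac{|X_i|+|X_j|}{X_i-X_j}\ind_{\{X_i\neq X_j\}}$ has the repulsive singular structure required there, and the diffusion $2\sqrt{|X_i|}$ is diagonal and $\tfrac12$-Hölder. Non-collision is obtained by a McKean-type Lyapunov argument: applying It\^o's formula to $H(X)=\sum_{i<j}\log(X_j-X_i)$ one checks that the drift of \eqref{eq:SDE:BESQnc} contributes a term to $dH$ that compensates the negative $-\sum_{i<j}\frac{4(|X_i|+|X_j|)}{(X_j-X_i)^2}\,dt$ coming from the quadratic variation of the $2\sqrt{|X_i|}\,dB_i$, keeping $H(X_t)>-\infty$ for all $t>0$. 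A degenerate initial configuration would be reached by perturbing $x_i^{(\eps)}=x_i+i\eps$ for $\eps>0$, solving the regular system for each $\eps$, and extracting a weak limit as $\eps\downarrow 0$.

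For uniqueness of two non-colliding solutions sharing the Brownian drivers $B_1,\ldots,B_p$ and the initial condition $x$, I would localize via the stopping times
\formula{
   \tau_n=\inf\set{t>0:\ \min_{i<j}(X_j(t)-X_i(t))\le 1/n\ \text{or}\ \max_i |X_i(t)|\ge n}\/,
}
which, thanks to non-collision, satisfy $\tau_n\to\infty$ almost surely. On each $[0,\tau_n]$ the drift is bounded and Lipschitz in $(X_1,\ldots,X_p)$, while the diffusion is diagonal with entries $2\sqrt{|X_i|}$ depending only on $X_i$ and driven by the independent $B_i$. This is precisely the borderline setting of the Yamada--Watanabe theorem \cite{bib:yw71}, which yields pathwise uniqueness on each $[0,\tau_n]$; passing to the limit $n\to\infty$ then gives uniqueness on $[0,\infty)$.

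The main obstacle is the initial layer when $x$ is itself degenerate: at $t=0$ the drift is singular, hence $\tau_n=0$ for all large $n$ and neither the McKean estimate nor the Yamada--Watanabe argument is directly available. To cross $t=0$ I would rely on the symmetric-polynomial calculus developed in Section~\ref{sec:poly}, exploiting the fact that elementary symmetric functionals such as $S_1=\sum_i X_i$ and $\prod_{i<j}(X_j-X_i)^2$ satisfy closed SDEs of BESQ-type whose boundary behaviour is well understood. These closed equations should yield estimates on the perturbed configurations $X^{(\eps)}$ that are uniform in $\eps$, thereby allowing passage to the limit $\eps\downarrow 0$, which simultaneously produces the non-collision property of the limiting process for $t>0$ and its uniqueness among non-colliding solutions.
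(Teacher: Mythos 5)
Your overall architecture (regular data via the repulsive-drift machinery of \cite{bib:gm2}, uniqueness via localization plus a multidimensional Yamada--Watanabe argument, symmetric polynomials to cross the initial layer) is in the same spirit as the paper, which indeed delegates pathwise uniqueness among non-colliding solutions and existence for $\alpha\notin\{0,1,\ldots,p-2\}$ to \cite{bib:gm2}. But the entire difficulty of the theorem is concentrated in the case you leave to ``uniform estimates'': $\alpha\in\{0,1,\ldots,p-2\}$ with several particles starting at $0$. Here your perturbation-and-weak-limit scheme has a genuine gap. For exactly these data the system \eqref{eq:SDE:BESQnc} admits \emph{colliding} solutions in which a block of particles stays frozen at $0$ forever (the indicators kill the interaction and the residual drift $\alpha+l-n$ can vanish for suitable ranks $n,l$; this is precisely how the paper builds the non-uniqueness examples in the proof of Theorem \ref{thm:SDEs:general}). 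For instance, with $p=2$, $\alpha=0$, $x=(0,0)$, both $X\equiv(0,0)$ and the non-colliding pair $(-BESQ^{(1)}(0),BESQ^{(1)}(0))$ solve the system. Consequently a compactness argument on $X^{(\eps)}$ cannot by itself identify the limit as the non-colliding solution: you must positively rule out that the limit is one of the frozen ones, and nothing in your outline does that. Saying that the closed SDEs for symmetric polynomials have ``well understood boundary behaviour'' begs the question, because whether the particles leave $0$ depends on the arithmetic of $\alpha$, $p$ and the ranks $\rk^{\pm}(x)$ --- this dichotomy is the actual content of Theorems \ref{thm:SDEs:general} and \ref{thm:SDEs:Nonnegative}.

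The paper's proof supplies the two missing ingredients. When $\rk^+(x)\le n^*$ and $\rk^-(x)\le p-n^*$ it does not approximate at all: it \emph{glues} a non-positive subsystem of $p-n^*$ particles with parameter $n^*-\alpha$ and a non-negative subsystem of $n^*$ particles with parameter $\alpha+p-n^*$, checks that the cross-interaction terms reduce to the constants $\pm 1$, and verifies non-collision at the junction using the fact that the zero sets of the boundary BESQ particles have Lebesgue measure zero. In the complementary case it solves the polynomial SDEs and shows by a direct computation that the drift of $e_n(X)$ on the event that the zero block persists equals $m\,e_{n-1}(X)(\alpha+l-n)\,dt$ with $\alpha+l-n\neq 0$, contradicting $e_n(X)\equiv 0$; hence the zero particles split instantly. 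You would need an argument of comparable precision (not just tightness) to make your limit procedure work; as written, the proposal does not prove existence of a non-colliding solution in the critical regime, and the claim that the limit passage ``simultaneously'' yields uniqueness conflates the constructed solution with all non-colliding solutions. Two smaller points: the It\^o term you want to cancel in $dH$ is $-2(|X_i|+|X_j|)/(X_j-X_i)^2$, not $-4(\ldots)$; and a weak limit only gives a weak solution, so you should invoke the Yamada--Watanabe principle explicitly to upgrade to a strong one.
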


The proof of Theorem \ref{thm:SDEs:NC:unicity} is postponed until Section \ref{sec:EandU}, since it requires some knowledge of \pg{elementary} symmetric polynomials studied in details in Section \ref{sec:poly}.

\begin{remark}
  Note that if we study non-colliding solutions we can remove the indicators from the drift parts of equations   \pg{\eqref{eq:SDE:BESQnc}}.
\end{remark}

Theorem \ref{thm:SDEs:NC:unicity} enables us to introduce the following

\begin{definition}
   The unique strong solution to \eqref{eq:SDE:BESQnc}, which has no collisions after the start is called \textbf{non-colliding squared Bessel particle system} of dimension $\alpha\in\R$ starting from the point $(x_1,\ldots,x_p)$, where $x_1\leq x_2\leq \ldots\leq x_p$ and it will be denoted by $BESQ_{nc}^{(\alpha)}(x_1,\ldots,x_p)$.
\end{definition}

Since, by Theorem \ref{thm:SDEs:NC:unicity}, there always exists a unique non-colliding solution, it is natural to ask if there are any other solutions. To formulate the result providing necessary and sufficient conditions for \eqref{eq:BESQp:SDE} to have unique strong solution we have to introduce the following notation.

For fixed $p$ and $\alpha\in \{0,1,\ldots,p-2\}$ we define an integer number $n^*$ by requesting that
\begin{equation}\label{def:n_star}
2n^*\in \{p+\alpha,p+\alpha+1\}.
\end{equation} 
 Note that $n^*$ is  uniquely determined, since exactly one of the consecutive integer numbers is even. Moreover, for a fixed point $x=(x_1,\ldots,x_p)\in \R^n$, $x_1\leq \ldots\leq x_p$, we define 
\begin{eqnarray*}
   \textrm{rk}^+(x) = \sum_{i=1}^p \ind_{(0,\infty)}(x_i)\/,\quad \textrm{rk}^-(x) = \sum_{i=1}^p \ind_{(-\infty,0)}(x_i)\/,
\end{eqnarray*}  
and set $\textrm{rk}(x) = \textrm{rk}^+(x)+\textrm{rk}^-(x)$, i.e. $\textrm{rk}^+(x)$, $\textrm{rk}^-(x)$, $\textrm{rk}(x)$ is the  number of \pg{strictly} positive, strictly negative and all  non-zero values among $x_1,\ldots,x_p$.

\begin{theorem}
   \label{thm:SDEs:general}
	 There exists \textbf{unique strong solution} to 
	\begin{eqnarray*}
	  d X_i &=& 2\sqrt{|X_i|}dB_i + \left(\alpha+\sum_{j\neq i}\frac{|X_i|+|X_j|}{X_i-X_j}\ind_{\{X_i\neq X_j\}}\right)dt\/,\quad i=1,\ldots,p\\
	&&X_1(t)\leq X_2(t)\leq\ldots\leq X_p(t),\quad t\geq 0
	\end{eqnarray*}
	with the initial condition $X(0)=x$, where $x=(x_1,\ldots,x_p)$, if and only if one of the following conditions holds 
	\begin{itemize}
	   \item[(a)] $|\alpha|\notin \{0,1,\ldots,p-2\}$
		 \item[(b)] $|\alpha|\in \{0,1,\ldots,p-2\}$ and ($\textrm{rk}^+(x)>n^*$ or $\textrm{rk}^-(x)>p-n^*$).
	\end{itemize}
	Moreover, the unique solution is non-colliding. 
\end{theorem}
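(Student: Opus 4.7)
The plan is to reduce the theorem to three points: existence is automatic, uniqueness under (a) or (b) amounts to showing that any strong solution must be non-colliding, and failure of uniqueness in the remaining regime is established by an explicit construction.

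\emph{Existence and reduction.} For any $\alpha \in \R$ and any ordered initial point, Theorem \ref{thm:SDEs:NC:unicity} already supplies the non-colliding strong solution $BESQ_{nc}^{(\alpha)}(x_1,\ldots,x_p)$ and asserts pathwise uniqueness within the non-colliding class. So the proof reduces to: (U) under (a) or (b), every strong solution has no collisions on $(0,\infty)$; (N) otherwise, there exists a colliding strong solution different from the non-colliding one.

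\emph{Uniqueness.} For (U), given any strong solution $X$, I would set $T = \inf\{t > 0 : X_i(t) = X_j(t) \text{ for some } i \neq j\}$ and aim at $T = +\infty$ a.s., which by Theorem \ref{thm:SDEs:NC:unicity} forces $X = BESQ_{nc}^{(\alpha)}(x)$. The workhorse is the SDE system for the elementary symmetric polynomials $e_k(X)$ derived in Section \ref{sec:poly}: It\^o's formula exposes a drift structure through which one controls attainability of the zero set and of the diagonal. In case (a), $|\alpha|$ is so large that a BESQ-dimension-$\alpha$ comparison at the level of the trace (for $\alpha > p-2$) or the determinant (for $\alpha < -(p-2)$) rules out collisions, along the lines of \cite{bib:gm2}. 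In case (b), $n^*$ defined by $2n^* \in \{p+\alpha, p+\alpha+1\}$ is precisely the critical number of strictly positive eigenvalues at which the associated squared Bessel subsystem of effective dimension $\alpha$ becomes unable to reach the origin; the hypothesis $\rk^+(x) > n^*$ (symmetrically, $\rk^-(x) > p - n^*$) then prevents the $(n^*+1)$st particle from hitting zero, and a comparison with $BESQ_{nc}^{(\alpha)}(x)$ propagates non-collision throughout the whole system.

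\emph{Non-uniqueness.} For (N), when $|\alpha| \in \{0,\ldots,p-2\}$, $\rk^+(x) \leq n^*$ and $\rk^-(x) \leq p - n^*$, I would construct a second solution by splitting the system. Pick integers $n \in [\rk^+(x), n^*]$ and $m \in [\rk^-(x), p - n^*]$, run the top $n$ coordinates as a non-colliding $BESQ^{(\alpha)}$ subsystem, the bottom $m$ as its sign-reversed counterpart, and pin the middle $p - n - m$ coordinates at zero. These thresholds are exactly those for which the Wishart realization $\mathbb{Y}_t = M_t M_t^T$ with a $p \times |\alpha|$ Brownian $M_t$ admits at most $|\alpha|$ nonzero eigenvalues, guaranteeing that the degenerate subsystems are feasible. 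The concatenation solves \eqref{eq:BESQp:SDE}: the indicator $\ind_{\{X_i \neq X_j\}}$ turns off the singular drift among the pinned zero coordinates, collisions occur on $[0,\infty)$, and the process therefore differs from $BESQ_{nc}^{(\alpha)}(x)$. The main obstacle is (U) in case (b): converting the arithmetic condition on $n^*$ together with the rank assumption into the pathwise statement that $X_{n^*+1}(t) > 0$ (or $X_{p-n^*}(t) < 0$) for all $t > 0$. The absolute values in \eqref{eq:BESQp:SDE} mean one cannot directly quote \cite{bib:gm2}, so one must analyze the SDEs for $e_{n^*+1}, \ldots, e_p$ at the boundary of the Weyl chamber, which is where Section \ref{sec:poly} is expected to carry the main technical burden.
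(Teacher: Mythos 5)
Your reduction is the paper's: existence and uniqueness within the non-colliding class come from Theorem \ref{thm:SDEs:NC:unicity}, so it remains to show (U) that under (a) or (b) every solution is non-colliding, and (N) that otherwise a colliding solution can be glued from a non-negative block, a sign-reversed non-negative block, and coordinates pinned at zero. But both halves have genuine gaps. For (U), you misread condition (a): the set $|\alpha|\notin\{0,1,\ldots,p-2\}$ contains every non-integer $\alpha$, however small, so an argument premised on ``$|\alpha|$ so large'' (trace comparison for $\alpha>p-2$, determinant for $\alpha<-(p-2)$) says nothing about, say, $\alpha=1/2$ and $p=10$. The paper's mechanism is arithmetic, not size-based: if $m\geq 2$ particles stayed at $0$ on a time interval while $l$ were strictly negative and the rest strictly positive, the elementary symmetric polynomial of degree one more than the number of nonzero particles would vanish identically, yet its It\^o drift equals $m\,e_{n-1}(X)(\alpha+l-n)\,dt$ with $e_{n-1}(X)\neq 0$; the factor $\alpha+l-n$ is automatically nonzero when $\alpha$ is not an integer in $\{0,\ldots,p-2\}$, and under the rank hypotheses of (b) a sign computation ($2n>p+\alpha$, resp.\ $l>p-n^*$) excludes it too. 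Relatedly, the statement to be proved is not that the $(n^*+1)$st particle never hits zero (particles may hit zero); it is that no group of two or more particles can remain glued at zero. Your worry about the indicators is legitimate, but its resolution is bookkeeping: replacing $|X_i|+|X_j|$ by $(|X_i|+|X_j|)\ind_{\{X_i\neq X_j\}}$ leaves conditions (A1), (A3)--(A5) of \cite{bib:gm2} unchanged and weakens (A2) to an inequality that still suffices for Theorem 4.4 there; no new boundary analysis of $e_{n^*+1},\ldots,e_p$ is needed.

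For (N), your construction omits its essential constraint. A coordinate pinned at $0$ has zero diffusion but drift $\alpha-n+l$ ($-1$ from each strictly positive neighbour, $+1$ from each strictly negative one, the indicator removing only the zero--zero terms), so the pinned block stays at $0$ only if the integers are chosen with $\alpha+l-n=0$; picking arbitrary $n\in[\rk^+(x),n^*]$ and $m\in[\rk^-(x),p-n^*]$ does not yield a solution, and the Wishart heuristic $\mathbb{Y}_t=M_tM_t^T$ does not substitute for this drift balance. You also assert without proof that collisions occur: when $p-n-l=1$ there is a single pinned coordinate and no collision inside the pinned block, so one must show, as the paper does by proving that $\alpha^+<n+1$ or $\alpha^-<l+1$, that an adjacent particle almost surely hits $0$ and collides with it. Finally, the outer blocks must be checked to stay non-negative, resp.\ non-positive ($\alpha^+=\alpha+p-n>n-1$ and $\alpha^-=p-\alpha-l>l-1$), since only then do the cross-interaction terms reduce to the constants $\pm1$ that make the concatenation solve \eqref{eq:BESQp:SDE}.
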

Obviously, the unique strong solution from Theorem \ref{thm:SDEs:general} must be, by Theorem \ref{thm:SDEs:NC:unicity}, non-colliding.  

Next we consider the problem of existence and uniqueness of non-negative solutions. The classical results related to $p=1$ say that the squared Bessel process $BESQ^{(\alpha)}(x)$ is non-negative if and only if $x\geq 0$ and $\alpha\geq 0$. In the multidimensional case we can ask analogous question introducing the following

\begin{definition}
 A solution $(X_1,\ldots,X_p)$ of \eqref{eq:BESQp:SDE} is called \textbf{non-negative} if $X_1(t)\geq 0$ for every $t>0$ a.s.
\end{definition}

Looking at the matrix interpretation of considered particle systems, non-negativity of $(X_1,\ldots,X_p)$ is equivalent to the condition saying that the corresponding matrix value process stays in $\overline{{\mathcal S}_p^+}$, where ${\mathcal S}_p^+$ is the open cone of positive definite symmetric matrices. The multidimensional result is provided in 

\begin{theorem}
\label{thm:SDEs:Nonnegative}
There exists unique strong \textbf{non-negative} solution to 
	\begin{eqnarray*}
	  d X_i &=& 2\sqrt{|X_i|}dB_i + \left(\alpha+\sum_{j\neq i}\frac{|X_i|+|X_j|}{X_i-X_j}\ind_{\{X_i\neq X_j\}}\right)dt\/,\quad i=1,\ldots,p\\
	&&X_1(t)\leq X_2(t)\leq\ldots\leq X_p(t)\quad t\geq 0
	\end{eqnarray*}
	with the initial condition $X(0)=x$, where $x=(x_1,\ldots,x_p)$ and $x_1\geq 0$, if and only if one of the following conditions holds 
	\begin{itemize}
	   \item[(a)] $\alpha\geq p-1$
		 \item[(b)] $\alpha\in \{0,1,\ldots,p-2\}$ and $\textrm{rk}(x)\leq \alpha$.
	\end{itemize}
\end{theorem}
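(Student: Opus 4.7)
The proof rests on the matrix interpretation of the particle system. Since $|X_i|=X_i$ and the indicators can be dropped for a non-negative solution, \eqref{eq:BESQp:SDE} is exactly the ordered-eigenvalue SDE associated with the Wishart SDE \eqref{eq:Wishart:SDE} taking values in the closed cone $\overline{{\mathcal S}_p^+}$. For the existence in case (a), I would invoke the classical Wishart construction: for $\alpha\geq p-1$, \eqref{eq:Wishart:SDE} admits a strong solution $\mathbb{Y}_t\in\overline{{\mathcal S}_p^+}$ from any positive semidefinite $\mathbb{Y}_0$ with simple spectrum, whose ordered eigenvalues are non-colliding on $(0,\infty)$ and solve \eqref{eq:BESQp:SDE}; uniqueness among non-negative solutions then reduces to uniqueness among non-colliding ones, which is exactly Theorem \ref{thm:SDEs:NC:unicity}. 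For the existence in case (b), I would use the explicit construction $\mathbb{Y}_t=\mathbb{M}_t\mathbb{M}_t^T$ with $\mathbb{M}_t=\mathbb{M}_0+\mathbb{B}_t$, where $\mathbb{B}_t$ is a $p\times\alpha$ Brownian matrix and $\mathbb{M}_0$ is any $p\times\alpha$ real matrix satisfying $\mathbb{M}_0\mathbb{M}_0^T=\mathrm{diag}(x_1,\ldots,x_p)$ (possible because $\rk(x)\leq\alpha$); It\^o's formula confirms that $\mathbb{Y}_t$ solves \eqref{eq:Wishart:SDE} with parameter $\alpha$, and $\mathbb{Y}_t\in\overline{{\mathcal S}_p^+}$ by construction, so the ordered eigenvalues furnish the required non-negative solution.

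For the necessity direction, the core computation is as follows: whenever $X_1(t)=\ldots=X_k(t)=0<X_{k+1}(t)\leq\ldots\leq X_p(t)$, the indicators suppress the interactions among the zero particles while each interaction with $j>k$ contributes $\frac{|X_i|+|X_j|}{X_i-X_j}=-1$ to the drift of $X_i$ for $i\leq k$. Consequently each bottom particle is driven by an effective squared Bessel dimension $\alpha-(p-k)$, and a BESQ-type comparison with the one-dimensional theory of \cite{bib:gyj} forces $\alpha\geq p-k$ for every configuration the non-negative solution can attain. Evaluating at $t=0$ already gives $\alpha\geq\rk^+(x)=\rk(x)$. To rule out non-integer $\alpha\in(0,p-1)$, I would monitor the rank process $m(t)=\rk^+(X(t))$: while $m(t)<\alpha$, the zero particles have strictly positive effective dimension and are pushed upward, so $m$ eventually reaches $\lceil\alpha\rceil>\alpha$; at that moment the effective dimension becomes $\alpha-\lceil\alpha\rceil<0$ and the degenerate drift forces $X_1$ below zero, contradicting non-negativity. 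Hence either $\alpha\geq p-1$, which is (a), or $\alpha\in\{0,\ldots,p-2\}$ with $\rk(x)\leq\alpha$, which is (b).

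The main obstacle is the uniqueness statement in case (b), because non-negative solutions here generically collide at the origin (the bottom particles remain glued to $0$), so Theorem \ref{thm:SDEs:NC:unicity} cannot be applied; moreover, by Theorem \ref{thm:SDEs:general} the SDE itself fails to be uniquely solvable for these parameters without the non-negativity constraint. I would resolve this through the elementary symmetric polynomial framework of Section \ref{sec:poly}. For any non-negative solution, the associated matrix $\mathbb{Y}_t$ has rank at most $\alpha$ almost surely, so the elementary symmetric polynomials $e_k(X_1,\ldots,X_p)$ vanish identically for $k>\alpha$. The SDEs obtained in Section \ref{sec:poly} then form a closed system in $(e_1,\ldots,e_\alpha)$ which admits pathwise uniqueness, and since the ordered tuple $(X_1,\ldots,X_p)$ can be recovered from its first $\alpha$ elementary symmetric polynomials together with $e_k\equiv 0$ for $k>\alpha$, uniqueness propagates to the particle level.
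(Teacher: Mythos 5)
Your existence arguments are essentially workable and close to the paper's (case (b) is exactly Bru's $\mathbb{M}_t\mathbb{M}_t^T$ construction, which the paper cites; for case (a) the paper instead combines Theorem \ref{thm:SDEs:general} with the observation that $e_p$ is a time-changed $BESQ^{(\alpha-p+1)}$ with non-negative index, which avoids having to solve the matrix SDE from a starting point with repeated eigenvalues). The necessity direction, however, is incomplete as written: ``evaluating at $t=0$'' only yields a constraint when $x_1=0$, whereas for $x_1>0$ there are no zero particles at time $0$ and your effective-dimension inequality says nothing until you have shown that $X_1$ actually reaches $0$ (the paper gets this by comparison with a one-dimensional $BESQ^{(\alpha-p+1)}(x_1)$). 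Similarly, the rank-monitoring step for non-integer $\alpha$ asserts without proof that the zero block is ``pushed upward'' until $m(t)>\alpha$; justifying that particles cannot remain glued at $0$ requires the symmetric-polynomial drift computation of Section \ref{sec:poly}, and is anyway unnecessary here, since for $|\alpha|\notin\{0,\dots,p-2\}$ Theorem \ref{thm:SDEs:general} already forces the solution to be non-colliding, so that after time $0$ all $p-1$ other particles lie strictly above $X_1$ and its drift is bounded by $\alpha-(p-1)<0$.

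The serious gap is the uniqueness argument in case (b), which rests on two unproved claims. First, that every non-negative solution satisfies $\rk(X(t))\le\alpha$ for all $t$: this is the heart of the matter and does not come for free; one must combine the drift comparison with the strong Markov property to show that if more than $\alpha$ particles were ever nonzero, or if $X_1$ ever left $0$, the solution would become negative with positive probability. You gesture at the ingredient in the necessity part but never carry out this step. Second, and more damagingly, you assert that the closed system for $(e_1,\dots,e_\alpha)$ ``admits pathwise uniqueness.'' Nothing in the paper establishes pathwise uniqueness for the polynomial SDEs \eqref{eq:poly:SDE}, whose diffusion coefficients are degenerate square roots of polynomials; indeed Theorem \ref{thm:SDEs:general} exhibits parameter ranges in which the particle system, and hence the polynomial system, has several distinct solutions, so this cannot be taken as a routine fact. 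The paper's proof avoids the issue entirely: having shown $X_1=\dots=X_{p-\alpha}\equiv0$, each frozen zero particle contributes $+1$ to the drift of every positive particle, so $(X_{p-\alpha+1},\dots,X_p)$ solves a BESQ particle system with $\tilde p=\alpha$ particles and dimension $\tilde\alpha=p$, which lies in the uniqueness regime of Theorem \ref{thm:SDEs:general}(a). You should replace your polynomial-uniqueness step by this reduction.
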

\begin{remark}
Note that  Theorem \ref{thm:SDEs:Nonnegative} is a spectral analogue of the characterization of the Non-central Gindikin Set proved in \cite{bib:gmm:2017}.  
\end{remark}

\section{Symmetric polynomials of squared Bessel particles}
\label{sec:poly}
The elementary symmetric polynomials of $X=(X_1,\ldots,X_p)$ are defined by
\begin{eqnarray*}
   e_n(X)= \sum_{i_1<i_2<\ldots<i_n}X_{i_1}X_{i_2}\cdot\ldots\cdot X_{i_n}
\end{eqnarray*}
for every $n=1,2,\ldots,p$. We use the convention that $e_0(X)\equiv 1$ and $e_n(X)\equiv 0$ for $n>p$. Moreover, we write $e_{n}^{\overline{j_1},\overline{j_2},\ldots,\overline{j_m}}(X)$ for an incomplete elementary  symmetric polynomial 
\begin{eqnarray*}
   e_{n}^{\overline{j_1},\overline{j_2},\ldots,\overline{j_m}}(X) = \sum_{\stackrel{i_1<i_2<\ldots<i_n}{i_k\neq j_l}}X_{i_1}X_{i_2}\cdot\ldots\cdot X_{i_n}\/,
\end{eqnarray*} 
i.e. the sum of all products of length $n$
\pg{of different $X_i$'s}, not including any of $X_{j_1}, \ldots, X_{j_m}$. 


\begin{proposition}
If $X$ is a non-colliding solution of \eqref{eq:BESQp:SDE}, then $(e_1,\ldots,e_p)$ are semi-martingales described by
\begin{eqnarray}
\label{eq:poly:general}
d e_n(X) = \left(\sum_{i=1}^p |X_i|\pg{(e_{n-1}^{\overline{i}}(X))^2}\right)^{1/2}dV_n+\left(\sum_{i=1}^p\alpha e_{n-1}^{\overline{i}}(X)-\sum_{i< j}(|X_i|+|X_j|)e_{n-2}^{\overline{i}, \overline{j}}(X)\right)dt
\end{eqnarray}
for $n=1,\ldots,p$. Here $(V_1,\ldots,V_p)$ is a collection of one-dimensional Brownian motions such that
\begin{eqnarray}
\label{eq:poly:brackets}
d\left<e_n(X),e_m(X)\right> = \pg{4}\sum_{i=1}^p|X_i|e_{n-1}^{\overline{i}}(X)e_{m-1}^{\overline{i}}(X)dt\/.
\end{eqnarray}
\end{proposition}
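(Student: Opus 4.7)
My plan is to derive \eqref{eq:poly:general} by applying Itô's formula to $e_n$ regarded as a polynomial in $(X_1,\ldots,X_p)$. The partial derivatives are $\partial_i e_n = e_{n-1}^{\overline{i}}(X)$, $\partial_i^2 e_n = 0$, and $\partial_i\partial_j e_n = e_{n-2}^{\overline{i},\overline{j}}(X)$ for $i\neq j$. Because the driving Brownian motions $B_1,\ldots,B_p$ in \eqref{eq:BESQp:SDE} are independent, $d\langle X_i,X_j\rangle = 0$ for $i\neq j$, so all mixed second-order contributions vanish; the diagonal second-order contributions also vanish since $\partial_i^2 e_n = 0$. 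Thus Itô collapses to a pure sum of first-order terms.

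Substituting the SDE for $dX_i$ and using the non-colliding property to drop the indicators, the drift of $e_n$ becomes
\begin{eqnarray*}
\alpha\sum_{i=1}^p e_{n-1}^{\overline{i}}(X) + \sum_{i\neq j} e_{n-1}^{\overline{i}}(X)\,\frac{|X_i|+|X_j|}{X_i-X_j}.
\end{eqnarray*}
The crucial algebraic step is the identity
\begin{eqnarray*}
e_{n-1}^{\overline{i}}(X) - e_{n-1}^{\overline{j}}(X) \;=\; (X_j - X_i)\, e_{n-2}^{\overline{i},\overline{j}}(X),
\end{eqnarray*}
which follows by decomposing $e_{n-1}^{\overline{i}}$ according to whether $X_j$ appears among the factors, and symmetrically for $e_{n-1}^{\overline{j}}$. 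After symmetrizing the double sum under $i\leftrightarrow j$ and applying this identity, the double sum reduces to $-\sum_{i<j}(|X_i|+|X_j|)\,e_{n-2}^{\overline{i},\overline{j}}(X)$, which is exactly the non-$\alpha$ part of the drift in \eqref{eq:poly:general}.

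The martingale part emerging from Itô is $\sum_{i=1}^p 2\sqrt{|X_i|}\, e_{n-1}^{\overline{i}}(X)\, dB_i$. Computing its covariation with the analogous expression for $e_m$ using independence of the $B_i$ yields
\begin{eqnarray*}
d\langle e_n(X), e_m(X)\rangle \;=\; 4\sum_{i=1}^p |X_i|\,e_{n-1}^{\overline{i}}(X)\,e_{m-1}^{\overline{i}}(X)\,dt,
\end{eqnarray*}
which is \eqref{eq:poly:brackets}. Lévy's characterization then supplies a one-dimensional Brownian motion $V_n$ such that the martingale part of $e_n$ takes the form displayed in \eqref{eq:poly:general}.

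I expect the only delicate issue to be the singularity of the drift coefficients $(|X_i|+|X_j|)/(X_i-X_j)$ near a collision. The non-colliding hypothesis gives $X_i(t)\neq X_j(t)$ for every $t>0$ almost surely, so Itô's formula is applied on the open random set on which all coefficients are smooth; within that set every manipulation above is justified pathwise. The algebraic symmetrization is, pleasantly, also responsible for showing that the resulting drift in \eqref{eq:poly:general} is polynomial in $(X_1,\ldots,X_p)$, with no residual singularity.
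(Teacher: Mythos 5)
Your proof is correct and is essentially the argument the paper relies on: the paper's one-line proof simply cites \cite[Prop.~3.1]{bib:gm2}, which is precisely this It\^o computation, with your identity $e_{n-1}^{\overline{i}}(X)-e_{n-1}^{\overline{j}}(X)=(X_j-X_i)\,e_{n-2}^{\overline{i},\overline{j}}(X)$ as the key algebraic step. One small remark: your computation correctly yields quadratic variation $4\sum_{i}|X_i|\bigl(e_{n-1}^{\overline{i}}(X)\bigr)^2\,dt$ for the martingale part, consistent with \eqref{eq:poly:brackets}, so the diffusion coefficient in \eqref{eq:poly:general} should read $2\bigl(\sum_{i}|X_i|(e_{n-1}^{\overline{i}}(X))^2\bigr)^{1/2}$ (as it does in Theorem~\ref{thm:poly}); the missing factor $2$ is a typo in the displayed statement, not a gap in your argument.
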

\begin{proof}
	We apply \cite[Prop.3.1]{bib:gm2}.
	\end{proof}
\pg{The map $e=(e_1,\ldots,e_p)$ is} a diffeomorphism between $C_+=\{(x_1,\ldots,x_p)\in\R^p: x_1< x_2<\ldots<x_p\}$ and $e(C_+)$. Following \cite[Chapter 3]{bib:gm2}, we denote by $f:e(C_+)\longrightarrow C_+$ its inverse and note that $f$ can be continuously extended to
\begin{eqnarray*}
   f: \overline{e(C_+)}\stackrel{1-1}{\longrightarrow} \overline{C_+}\/.
\end{eqnarray*}
It implies that using the map $f$ we can write SDEs \eqref{eq:poly:general} and \eqref{eq:poly:brackets} only in terms of $e_1,\ldots,e_p$. The coefficients of those equations are continuous and the singularities of the form $(X_i-X_j)^{-1}$ disappear. In particular, there always exists a solution of those equations (see Proposition 3.2 in \cite{bib:gm2}). 

In the next theorem we write the coefficients \pg{of equations
\eqref{eq:poly:general} and
\eqref{eq:poly:brackets} 
 in a transparent way} in terms of $e_1,\ldots,e_p$ themselves (i.e. without incomplete polynomials and $X$). In order to shorten
the formulas,  we write $e_n$ instead of $e_n(X)$ and we set \pg{$e_r\equiv0$ if $r<0$ or $r>p$}.
\begin{theorem}
\label{thm:poly}
The elementary  symmetric polynomials of the non-colliding solution of \eqref{eq:BESQp:SDE} starting from $0\leq x_1\leq \ldots\leq x_p$ are semi-martingales described up to the first exit time $T=\inf\{t>0: X_1(t)<0\}$ by \pg{the following system of $p$ SDEs}
\begin{eqnarray}
\label{eq:poly:SDE}
   de_n = 2\left(\sum_{k=1}^{\pg{p}}(2k-1)e_{n-k}e_{n+k-1}\right)^{1/2}dV_n+(p-n+1)(\alpha-n+1)e_{n-1}dt\/,
\end{eqnarray}
where $V_n$ are one-dimensional Brownian motions such that
\begin{eqnarray}\label{eq:poly:bracks}
  d\left<e_n, e_{m}\right> = 4\sum_{k=1}^{\pg{p}} (m-n+2k-1)e_{n-k}e_{m+k-1}
\end{eqnarray}
for every $1\leq n\leq m\leq p$. 
\end{theorem}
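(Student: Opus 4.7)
The plan is to start from the previous Proposition, which expresses $de_n$ and $d\langle e_n,e_m\rangle$ in terms of the incomplete symmetric polynomials $e_r^{\overline i}$ and $e_r^{\overline{i},\overline{j}}$. On $[0,T)$ all particles are non-negative, so $|X_i|=X_i$ and the absolute values disappear from those formulas. The theorem then reduces to two polynomial identities in $X_1,\ldots,X_p$: the drift of $e_n$ must collapse to $(p-n+1)(\alpha-n+1)e_{n-1}$, and the quadratic covariation must satisfy
\begin{equation*}
\sum_{i=1}^p X_i\, e_{n-1}^{\overline i}\, e_{m-1}^{\overline i} = \sum_{k=1}^p (m-n+2k-1)\,e_{n-k}\,e_{m+k-1},\qquad n\le m.
\end{equation*}
Once this covariation identity is proved, the diffusion coefficient in \eqref{eq:poly:SDE} appears as the $m=n$ case (the factor $2$ coming out of the square root), and the Brownian motions $V_n$ are those already provided by the Proposition.

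The drift reduction is purely combinatorial. An $(n-1)$-subset of $\{1,\ldots,p\}$ is missed by exactly $p-n+1$ indices, so $\sum_i e_{n-1}^{\overline i}=(p-n+1)\,e_{n-1}$; the same counting gives $\sum_{j\neq i} e_{n-2}^{\overline{i},\overline{j}}=(p-n+1)\,e_{n-2}^{\overline i}$, while splitting each $(n-1)$-product as $X_i\cdot(\text{product of }n-2)$ in $n-1$ different ways yields $\sum_i X_i\,e_{n-2}^{\overline i}=(n-1)\,e_{n-1}$. After noting the symmetrization $\sum_{i<j}(X_i+X_j)\,e_{n-2}^{\overline{i},\overline{j}}=\sum_{i\neq j}X_i\,e_{n-2}^{\overline{i},\overline{j}}$, the drift becomes $\alpha(p-n+1)e_{n-1}-(p-n+1)(n-1)e_{n-1}=(p-n+1)(\alpha-n+1)e_{n-1}$.

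For the covariation I would use the generating function $E(t)=\prod_i(1+X_it)=\sum_n e_n t^n$ and its $i$-th deletion $E^{\overline i}(t)=E(t)/(1+X_it)$. The identity $tX_iE^{\overline i}(t)=E(t)-E^{\overline i}(t)$ summed over $i$ gives $\sum_i E^{\overline i}(t)=pE(t)-tE'(t)$; multiplying the same identity by $E^{\overline i}(s)$, summing over $i$, and subtracting the $s\leftrightarrow t$ analogue eliminates the symmetric term $\sum_iE^{\overline i}(s)E^{\overline i}(t)$ and yields the key relation
\begin{equation*}
(s-t)\sum_{i=1}^p X_i\,E^{\overline i}(s)\,E^{\overline i}(t) = sE'(s)E(t)-tE(s)E'(t) = \sum_{a,b\geq 0}(a-b)\,e_a e_b\, s^a t^b.
\end{equation*}
The right-hand side is antisymmetric in $(s,t)$, hence divisible by $s-t$ as a polynomial, and the coefficients $g_{a,b}$ of the quotient $g(s,t)$ satisfy the telescoping recursion $g_{a-1,b}-g_{a,b-1}=(a-b)e_a e_b$ with boundary $g_{\cdot,-1}=0$. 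Iterating this downward in the second index starting from $(a,b)=(m-1,n-1)$ gives $g_{m-1,n-1}=\sum_{k=1}^n (m-n+2k-1)\,e_{n-k}e_{m+k-1}$ for $n\le m$, which becomes \eqref{eq:poly:bracks} once the range is extended to $k=1,\ldots,p$ via the convention $e_r=0$ outside $\{0,\ldots,p\}$.

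The main obstacle is the covariation step: the closed form for $\sum_i X_iE^{\overline i}(s)E^{\overline i}(t)$ is not immediate and depends on the symmetrization trick outlined above, and the coefficient extraction through the telescoping recursion has to be unrolled carefully to produce the explicit weights $(m-n+2k-1)$. The drift identity and the passage from $d\langle e_n\rangle$ to the diffusion coefficient are comparatively routine once the bracket formula is established.
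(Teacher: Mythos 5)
Your proposal is correct, and the drift computation is essentially identical to the paper's (the same three counting identities, assembled the same way). Where you genuinely diverge is in the proof of the key bracket identity
\begin{equation*}
\sum_{i=1}^p X_i\, e_{n-1}^{\overline i}(X)\, e_{m-1}^{\overline i}(X) \;=\; \sum_{k} (m-n+2k-1)\,e_{n-k}(X)\,e_{m+k-1}(X).
\end{equation*}
The paper proves this by brute-force coefficient comparison: by symmetry it fixes a monomial $X_1^2\cdots X_l^2X_{l+1}\cdots X_{l+j}$ with $2l+j=n+m-1$, counts its occurrences on each side (getting $(n-l)\binom{j}{n-l}$ on the left and $\binom{j}{n-k-l}$ per term on the right), and reduces everything to the binomial identity $\sum_{r=0}^N(j-2r)\binom{j}{r}=(N+1)\binom{j}{N+1}$, proved by induction. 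Your route via the generating function $E(t)=\prod_i(1+X_it)$ and the deletion relation $tX_iE^{\overline i}(t)=E(t)-E^{\overline i}(t)$ is different and checks out: the antisymmetrization does eliminate $\sum_iE^{\overline i}(s)E^{\overline i}(t)$ and yields $(s-t)\sum_iX_iE^{\overline i}(s)E^{\overline i}(t)=sE'(s)E(t)-tE(s)E'(t)$, and the telescoping recursion $g_{a-1,b}=g_{a,b-1}+(a-b)e_ae_b$ with $g_{\cdot,-1}=0$, unrolled from $(m,n-1)$ down to $(m+n-1,-1)$, produces exactly the weights $m-n+2k-1$ for $k=1,\ldots,n$, after which the convention $e_r\equiv 0$ for $r<0$ or $r>p$ extends the range to $k\le p$. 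What your approach buys is a uniform, sign-free derivation that avoids the case bookkeeping over monomial shapes and the auxiliary binomial identity; what the paper's approach buys is elementarity (no formal power series, only counting). Both are complete proofs; just make sure, when you write yours out, to note explicitly that $\sum_iX_iE^{\overline i}(t)=E'(t)$ and that divisibility by $s-t$ is legitimate because the right-hand side is a polynomial antisymmetric in $(s,t)$.
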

\pg{
\begin{remark}
	The sum  
	in formula \eqref{eq:poly:SDE} has non-zero terms for
	$k=1,\ldots, K=\min(n, p+1-n)$ and the sum in 
	 \eqref{eq:poly:bracks} 
	for $k=1,\ldots, K=\min(n, p+1-m)$.
\end{remark}
}
\begin{proof}
Since we consider only $t<T$, we remove all the absolute values from \eqref{eq:poly:general} and \eqref{eq:poly:brackets}. We first compute  the drift part in equation \eqref{eq:poly:general}. It is easy to see that
\begin{eqnarray*}
	\sum_{i=1}^p e_{n-1}^{\overline{i}}(X) = (p-n+1)e_{n-1}(X)\/,
\end{eqnarray*}
since every product of length $n-1$ appears $p-(n-1)$ times in the last sum. Similarly, we have
\begin{eqnarray*}
	\sum_{i<j}(X_i+X_j)e_{n-2}^{\overline{i},\overline{j}}(X) = \sum_{i\neq j} X_i e_{n-2}^{\overline{i},\overline{j}}(X) =(p-n+1)(n-1)e_{n-1}(X)
\end{eqnarray*}
since the last sum consists of products of length $n-1$ and every product appears $(p-n+1)(n-1)$ times. Indeed, if we fix a product $X_{i_1}X_{i_2}\cdot\ldots\cdot X_{i_{n-1}}$ of length $n-1$,  it appears in $X_i e_{n-2}^{\overline{i},\overline{j}}(X)$ if and only if $i\in \{i_1,i_2,\ldots,i_{n-1}\}$ and $j\notin \{i_1,i_2,\ldots,i_{n-1}\}$. Consequently, we can choose $i$ on $n-1$ ways and $j$ on $p-(n-1)$ ways. It implies that \pg{the drift part of $e_n(X)$ equals}  $ (p-n+1)(\alpha-n+1)e_{n-1}(X)dt$. In order to show \eqref{eq:poly:SDE} and  \eqref{eq:poly:bracks}, it remains to show that 
\begin{eqnarray}
\label{simple_brack}
  \sum_{\pg{i=1}}^p X_{\pg{i}} e_{n-1}^{\overline{\pg{i}}}(X)e_{m-1}^{\overline{\pg{i}}}(X) = \sum_{k=1}^{{\pg p}} (m-n+2k-1)e_{n-k}(X)e_{m+k-1}(X)
\end{eqnarray}
for every $1\leq n\leq m\leq p$ \pg{(recall the notation $e_r\equiv0$ if $r<0$ or $r>p$)}. Observe that both sides of \eqref{simple_brack} are symmetric polynomials of degree $m+n-1$, where the variables \pg{$X_1,\ldots, X_p$} appear at most in power $2$. Due to symmetry, it is enough to show that, for a fixed ${\pg{l}\ge 0}$ and $j\geq 1$, the expression
\begin{eqnarray*}
   X_1^2\cdot\ldots\cdot X_{\pg{l}}^2X_{{\pg{l}}+1}\cdot\ldots\cdot X_{{\pg{l}}+j}
\end{eqnarray*}
appears on both sides of \eqref{simple_brack} the same number of times. Here $2{\pg{l}}+j=n+m-1$. Moreover, by the form of the LHS of \eqref{simple_brack}, we have  $l\le n-1$ and, consequently, $l+j =n-1-l+m\ge m\ge n$. The quadratic expression $X_1^2\cdot\ldots\cdot X_{\pg{l}}^2$ can only appear on the left-hand side of \eqref{simple_brack}  from the multiplication of $e_{n-1}^{\overline{\pg{i}}}(X)$ and $e_{m-1}^{\overline{\pg{i}}}(X)$ and $X_1\cdot\ldots\cdot X_{\pg{l}}$ must appear in both of them. Thus, it remains to count
  \pg{in  how many terms  of the LHS  the factors $X_{l+1},\ldots, X_{l+j}$   appear, so that the product
  $X_1^2\cdot\ldots\cdot X_{\pg{l}}^2X_{{\pg{l}}+1}\cdot\ldots\cdot X_{{\pg{l}}+j}$ is obtained. }
  
  \pg{Let  $s_i=X_{\pg{i}} e_{n-1}^{\overline{\pg{i}}}(X)e_{m-1}^{\overline{\pg{i}}}(X)$ be a term   of the left-hand side of \eqref{simple_brack}.
  	Observe that obligatorily  $X_i\in\{X_{l+1},\ldots, X_{l+j} \}$.
  Thus there are $j$ possible  choices of a term $s_i$.
We fix such a choice and count  the terms of the polynomial $e_{n-1}^{\overline{\pg{i}}}(X)$, which contain the product
$X_1\cdot\ldots\cdot X_{\pg{l}}$ and have remaining $n-1-l$ variables in the set $\{X_{l+1},\ldots, X_{l+j} \}\setminus \{X_i\}$. Equivalently, we count all choices of $n-1-l$ elements in a set with $j-1$ elements.  The remaining factors of $X_1^2\cdot\ldots\cdot X_{\pg{l}}^2X_{{\pg{l}}+1}\cdot\ldots\cdot X_{{\pg{l}}+j}$ come from the polynomial $e_{m-1}^{\overline{\pg{i}}}(X)$.
}
 Finally the coefficient of $X_1^2\cdot\ldots\cdot X_{\pg{l}}^2X_{{\pg{l}}+1}\cdot\ldots\cdot X_{{\pg{l}}+j}$ on the LHS of  \eqref{simple_brack} is
\begin{eqnarray*} 
   j \binom{j-1}{n-1-l} = (n-l)\binom{j}{n-l}
\end{eqnarray*}
\pg{(recall that $1\le n-l\le j$).}
Similarly, the considered product $X_1^2\cdot\ldots\cdot X_{\pg{l}}^2X_{{\pg{l}}+1}\cdot\ldots\cdot X_{{\pg{l}}+j}$
 appears in $e_{n-k}(X)e_{m+k-1}(X)$ exactly $\binom{j}{n-k-l}$ times.  Thus, it is enough to show that \pg{ for $j,l,m,n$ satisfying  $1\le n-l\le j$ and } $2l+j=n+m-1$,
  the following \pg{combinatorial} identity holds: 
\begin{eqnarray*}
   (n-l)\binom{j}{n-l} = \sum_{k=1}^{n} (m-n+2k-1)\binom{j}{n-k-l}\/.
\end{eqnarray*}
We use a convention that \pg{the Newton's symbol} $\binom{n}{\pg{r}}$ is zero whenever $r>n$ or $r<0$.

Using the relation $2l+j=m+n-1$, we  can rewrite the right-hand side as 
\begin{eqnarray*}
  \sum_{k=1}^{n} (m-n+2k-1)\binom{j}{n-k-l} &=& \sum_{k=1}^{n-l} (j-2(n-l-k))\binom{j}{n-l-k}.
\end{eqnarray*}
Substitutions $N=n-l-1$  \pg{and $  r=n-l-k$}  together with reordering the sum lead to 
\pg{a combinatorial formula}
\begin{eqnarray}\label{known}
   \sum_{r=0}^{N}(j-2r)\binom{j}{\pg{r}}= (N+1)\binom{j}{N+1},
\end{eqnarray}
\pg{where $0\le N\le j-1$}.
Formula \eqref{known} is known (see e.g. \cite{bib:DiscreteMath}) 
andvcan be easily proved by elementary induction on $N$.
\end{proof}

\section{ \pg{Proofs of Theorems 1-3}   }
\label{sec:EandU}

\begin{proof}[Proof of Theorem \ref{thm:SDEs:NC:unicity}]
Since we consider all possible starting points $x_1\leq \ldots\leq x_p$ (without restriction that $x_1$ must be non-negative), we can and we do assume that $\alpha\geq 0$. The general case follows immediately by multiplying equations \eqref{eq:BESQp:SDE} by $-1$ and re-ordering the particles.

First we note that the conditions $(C1)$ and $(A1)$ (or equivalently $(A1')$) from \cite{bib:gm2} hold for functions $\sigma(x)=2\sqrt{|x|}$, $b(x)=\alpha$ and $H(x,y)=|x|+|y|$.
\pg{For $(A1)$, see the proof of \cite[Cor. 6.5]{bib:gm2}.}

\pg{By  Theorem 5.3 and Remark 2.4} in \cite{bib:gm2} 
we get the pathwise uniqueness for non-colliding solutions 
(the other assumptions in Theorem 5.3 of \cite{bib:gm2}  were  used to construct such non-colliding solution). Consequently, it is enough to prove the existence of a non-colliding solution. If $\alpha\notin\{0,1,\ldots,p-2\}$, the result follows directly from Theorem 2.2 from \cite{bib:gm2} (see also Corollary 6.6 therein, \pg{where $\R$ should be $\R^+$}). 

Thus we focus on $\alpha\in\{0,1,\ldots,p-2\}$ and consider the general starting point $x=(x_1,\ldots,x_p)$.
\pg{Recall that condition (A4) from \cite{bib:gm2} fails if $\alpha\in\{0,1,\ldots,p-2\}$.} For simplicity, we denote $\textrm{rk}^+(x)=n$, $\textrm{rk}^-(x)=l$ and $m=p-\textrm{rk}(x)$, i.e. 
\begin{eqnarray*}
x_1\leq\ldots\leq  x_{l}<0=x_{l+1}=x_{l+2}=\ldots=x_{l+m}<x_{l+m+1}\leq \ldots\leq x_p\/.
\end{eqnarray*}
Recall  that $n^*$ is defined \pg{in \eqref{def:n_star}  }as  an integer  such that $2n^* \in\{p+\alpha,p+\alpha+1\}$. Note that  $\alpha\leq n^*< p$ since $\alpha\leq p-2$.

 Now we consider two cases. 
 
\textbf{Case 1: $n\leq n^*$ and $l\leq p-n^*$.} In this case we construct a solution by glueing two independent processes. First, we set $p_{-}=p-n^*>0$ and $\alpha_{-}=n^*-\alpha\ge 0$ and consider \pg{a system of $p_-$ SDEs}
\begin{eqnarray*}
   dZ_i= 2\sqrt{|Z_i|}dB_i +\left(\alpha_{-}+\sum_{j=1,j\neq i}^{p_{-}}\frac{|Z_i|+|Z_j|}{Z_i-Z_j}\right)dt,\quad i=1,\ldots, p_-
\end{eqnarray*}
starting from $Z_i(0)=-x_{p-\pg{n^*}-i+1}$ for $i=1,\ldots,p_{-}$.

 Note that our assumption $n\leq n^*$ implies $p-n^*\leq p-n = i+j$ and consequently $Z=(Z_1,\ldots,Z_{p_{-}})$ starts from non-negative point, i.e. $Z_1(0)=-x_{p-n^*}\geq 0$. Moreover we have $\alpha_{-}\geq p_{-}$, since $2n^*\geq p+\alpha$. It guaranties, by our previous considerations, that there exists unique strong solution which is non-colliding and the solution is non-negative ($\alpha_{-}\geq p_{-}$). Then, we put $p_+=n^*$ and $\alpha_+=\alpha+p-n^*$ and consider \pg{a system of $p_+$ SDEs}
\begin{eqnarray*}
   dY_i= 2\sqrt{|Y_i|}dB_i +\left(\alpha_+
   \pg{+}
   \sum_{j={p-n^*+1},j\neq i}^{p}\frac{|Y_i|+|Y_j|}{Y_i-Y_j}\right)dt\/,\quad i=p-n^*+1,\ldots,p\\
\end{eqnarray*}
where $Y_i(0)=x_i$ for $i=p-n^*+1, \ldots,p$. Once again our assumption $l\leq p-n^*$ ensures that $p-n^*+1\geq l+1$ and consequently the considered starting point is non-negative, i.e. $x_{p-n^*+1}\geq 0$. Moreover, we have $\alpha_+\geq p_+-1$ since $2n^*\leq p+\alpha+1$, which means that there exists unique strong non-colliding solution which is also non-negative. Now we put
\begin{eqnarray*}
   X_i(t) &=& \left\{
\begin{array}{rcl}
-Z_{p-n^*-i+1}(t) && i=1,\ldots,p-n^*\\
Y_i(t) && i=p-n^*+1,\ldots,p
\end{array}
\right.
\end{eqnarray*} 
and obviously we have $X_i(0)=x_i$ for every $i=1,\ldots,p$. Moreover, for every $i=1,\ldots,p-n^*$ and $j=p-n^*+1,\ldots,p$ we have
\begin{eqnarray*}
   \frac{|X_i|+|X_j|}{X_i-X_j} = -1\/,\quad \frac{|X_j|+|X_i|}{X_j-X_i}=1
\end{eqnarray*}
since $X_i(t)\leq 0$ and $X_j(t)\geq 0$. It implies that for $i=1,\ldots,p-n^*$ we can write
\begin{eqnarray*}
   dX_i &=& 2\sqrt{|X_i|}dB_i+\left(\alpha-n^*+\sum_{j=1,j\neq i}^{p-n^*}\frac{|X_i|+|X_j|}{X_i-X_j}\right)dt\\
	&=& 2\sqrt{|X_i|}dB_i + \left(\alpha + \sum_{j=1,j\neq i}^p\frac{|X_i|+|X_j|}{X_i-X_j}\right)dt 
\end{eqnarray*}
and the analogous computations can be done for remaining $i=p-n^*+1,\ldots,p$. Note also that $X=(X_1,\ldots,X_p)$ is non-colliding. Indeed, as we have seen, there are no collisions between $X_1,\ldots,X_{p-n^*}$ and separately between $X_{p-n^*+1},\ldots,X_p$. Moreover, the first particle system is non-positive and the other is non-negative, i.e. $X_{p-n^*}(t)\leq 0\leq X_{p-n^*+1}(t)$ for every $t>0$ a.s. It remains to show that these two particles do not collide at zero. However, if $2n^*=p+\alpha+1$, then $\alpha_{-}=n^*-\alpha=p-n^*+1=p_{-}+1$ and consequently $X_{p-n^*}(t)<0$ for every $t>0$. If $2n^*=p+\alpha$ then we have $\alpha_{-}=p_{-}$ and $\alpha_+=p_+$ which implies  that particles $X_{p-n^*}$ and $X_{p-n^*+1}$ visit zero but the sets $\{t:X_{p-n^*}(t)=0\}$ and $\{t:X_{p-n^*+1}(t)=0\}$ are of Lebesgue measure zero (see Proposition 4 in \cite{bib:b91}). In particular, there exists sequence $t_i\searrow 0$ such that $X_{p-n^*}(t_i)>0$ a.s. and consequently, there are no collisions at every $t_i$. By Proposition 4.2 in \cite{bib:gm2} we know that the particles will never collide after $t_n$ and thus there are no collisions for every $t>0$.

\textbf{ Case 2: $n>n^*$ or $l>p-n^*$.}
Following \pg{the main idea of} \cite{bib:gm2}, we get a solution, \pg{solving first} the SDEs for the elementary symmetric polynomials, i.e. we use a solution $e=(e_1,\ldots,e_p)$ of \eqref{eq:poly:general}. We set $(X_1,\ldots,X_p)=f(e_1,\ldots,e_p)$, where $f$ is the \pg{diffeomorphism} described in Section \ref{sec:poly}. It remains to show that 
$(X_1,\ldots,X_p)$ is non-colliding. If $m\leq 1$, i.e. there is at most one particle starting from zero, the result follows directly from the first part of the proof of Proposition 4.3 in \cite{bib:gm2}. Thus, it is enough to show that if $m>1$, the particles starting from zero will exit that point just after the start. By continuity of the paths, we have $\tau_1>0$ a.s. where $\tau_1=\inf\{t>0: X_i(t)=0\}\wedge \inf\{t>0: X_{i+j+1}(t)=0\}$, i.e. we do not have any additional zero particle up to time $\tau_1$. Assume that all particles starting from zero remain at zero for some $\tau_2>0$ with positive probability and put $\tau=\tau_1\wedge \tau_2$. Then it is clear that $e_{n}(X)\equiv 0$ for $t<\tau$, where $n=i+k+1$, since every product of length $n$ \pg{contains} at least one zero particle. In particular, the drift of $e_n(X)$ vanishes for $t<\tau$, but from the other side, it is equal to
\begin{eqnarray*}
    \textrm{drift}[e_n] &=& \sum_{i=1}^p\alpha e_{n-1}^{\overline{i}}(X)-\sum_{i< j}(|X_i|+|X_j|)e_{n-2}^{\overline{i}, \overline{j}}(X)\\
		&=& m e_{n-1}(X) \left(\alpha+l-n\right)dt\/.
\end{eqnarray*}
Indeed, for $t<\tau$, we have $e_{n-1}^{\overline{i}}(X)\equiv 0$ if $X_i(t)=0$ and $e_{n-1}^{\overline{i}}(X)=e_{n-1}(X)$ (the product of all non-zero particles) if $X_i(t)\neq 0$. Moreover, the expression $(|X_i|+|X_j|)e_{n-2}^{\overline{i}, \overline{j}}(X)$ is non-zero only if exactly one of particles $X_i$, $X_j$ is zero and
\begin{eqnarray*}
   \sum_{i< j}(|X_i|+|X_j|)e_{n-2}^{\overline{i}, \overline{j}}(X) &=& \sum_{i=l+1}^{l+m}\sum_{j=1}^p |X_j|e_{n-2}^{\overline{i}, \overline{j}}(X)\ind_{\{X_j\neq 0\}} = \sum_{i=l+1}^{l+m}\sum_{j=1}^p |X_j|\frac{e_{n-1}(X)}{X_j}\ind_{\{X_j\neq 0\}}\\
	&=& me_{n-1}(X)\sum_{j=1}^p \textrm{sgn}(X_j) =  me_{n-1}(X)(l-n)\/.
\end{eqnarray*}
However, if $n>n^*$ then $2n> p+\alpha$ and consequently $\alpha+l-n=\alpha+l+n-2n< l+n-p\leq 0$. On the other hand, if $l>p-n^*$, then $\alpha+l-n> p-n+\alpha-n^*\geq 0$, since $n\leq p$ and $\alpha\leq n^*$. In both cases we have $\alpha+l-n \neq 0$. It leads to a contradiction since $e_{n-1}(X)$ does not vanish as a product of non-zero particles. It means that at least one zero particle must become non-zero immediately. It will increase the number of non-zero particles on $\{t<\tau_1\}$ and consequently we will still have $n'>n^*$ or $l'>p-n^*$, where $l'$ and $n'$ are numbers of strictly negative and positive particles after instant exit from zero of some particles. Thus we can proceed using strong Markov property and inductively show that all particles must leave zero just after the start. This ends the proof.
\end{proof}
In fact, the above-given proof leads directly to the result presented in Theorem \ref{thm:SDEs:general}.
\begin{proof}[Proof of Theorem \ref{thm:SDEs:general}]
Existence of a solution was proved in Theorem \ref{thm:SDEs:NC:unicity}. Thus, it is enough to show that any solution of \eqref{eq:BESQp:SDE} is non-colliding. Then, using uniqueness of non-colliding solutions proved in Theorem \ref{thm:SDEs:NC:unicity}, we get the result. Thus let $X=(X_1,\ldots,X_p)$ be a solution. Then by It\^o formula and the computations provided in Proposition 3.1 in \cite{bib:gm2} we claim that the SDEs for $e_n(X)$ are of the same form but with $|X_i|+|X_j|$ replaced by $(|X_i|+|X_j|)\ind_{\{X_i\neq X_j\}}$. However, it does not affect the arguments presented above in the proof of Theorem 1, which say that whenever $\alpha\notin \{0,\ldots,p-2\}$ or $\alpha\in\{0,\ldots,p-2\}$ but $\rk^+(x)>n^*$ or $\rk^{-}(x)<p-n^*$ the particles become immediately distinct and never collide again. Note that adding the indicators $\ind_{\{X_i\neq X_j\}}$ does not affect conditions $(A1)$, $(A3)$, $(A4)$ and $(A5)$ needed in \cite{bib:gm2} and used above. The condition $(A2)$, which here simplifies to 
\begin{eqnarray*}
  |x|+|y|\leq (|x|+|y|)\ind_{\{x\neq y\}}\/,
\end{eqnarray*}
holds for every $x\neq y$, but it is enough for Theorem 4.4 from \cite{bib:gm2} \pg{to be true}. 

To finish the proof we construct a solution for $\alpha\in\{0,\ldots,p-2\}$ starting from $x=(x_1,\ldots,x_p)$ such that $\rk^+(x)\leq n^*$  and $\rk^{-}(x)\leq p-n^*$, which is not non-colliding, i.e. the uniqueness of a solution does not hold. First we note that there exist integers $n< n^*$ and $l< p-n^*$ such that $\alpha+l-n=0$. Let $Z=(Z_1,\ldots,Z_n)$ be the process $BESQ^{(\alpha^+)}_{nc}(x_1,\ldots,x_n)$, where $\alpha^+ = \alpha+p-n$,  described by 
\begin{eqnarray*}
  dZ_i  = 2\sqrt{|Z_i|}dB_i+\left(\alpha^+ +\sum_{j\neq i}\frac{|Z_i|+|Z_j|}{Z_i-Z_j}\right)dt\/,\quad i=1,\ldots,n\/.
\end{eqnarray*}
Note that $\alpha^+> n-1$ ($n<n^*<p+\alpha+1$) and consequently $Z$ is non-negative. Moreover, set $\alpha^{-}=-(\alpha-p+l)$ and let $Y=(Y_1,\ldots,Y_l)$ be $BESQ_{nc}^{\alpha^{-}}(-x_p,-x_{p-1},\ldots,-x_{p-l})$, i.e. 
\begin{eqnarray*}
  dY_i=2\sqrt{|Y_i|}dB_{p-i+1}+\left(\alpha^{-}+\sum_{j\neq i}\frac{|Y_i|+|Y_j|}{Y_i-Y_j}\right)dt\/,\quad i=1,\ldots,l\/.
\end{eqnarray*}
As previously, we have $\alpha^{-}>l-1$ and $Y$ is non-negative. Now we glue these solutions together with $p-n-l$ particles constantly equal to zero, i.e. we set 
\begin{equation*}
  X_i = \left\{
	\begin{array}{cc}
	   Z_i\/,& i=1,\ldots,n\\
		 0\/,& i=n+1,\ldots,p-l\\
		 -Y_{p+1-i}\/,& i=p-l+1.\ldots,p 
	\end{array}
	\right.
\end{equation*}
We can easily check that $X=(X_1,\ldots,X_p)$ solves 
\begin{eqnarray*}
  dX_i = 2\sqrt{|X_i|}dB_i+\left(\alpha+\sum_{j\neq i}\frac{|X_i|+|X_j|}{X_i-X_j}\ind_{X_i\neq X_j}\right)dt\/.
\end{eqnarray*}
Indeed, since $X_1,\ldots,X_{p-l}$ are non-negative and $X_{n+1},\ldots,X_p$ are non-positive we have
\begin{eqnarray*}
  \frac{|X_i|+|X_j|}{X_i-X_j} &=& 1 \textrm{ for } i=1,\ldots,n\/,\quad j=n+1,\ldots,p\/,\\
	 \frac{|X_i|+|X_j|}{X_i-X_j} &=& -1 \textrm{ for } i=p-l+1,\ldots,p\/,\quad j=1,\ldots,p-l\/,
\end{eqnarray*}
and the drift parts for $i=1,\ldots,n$ and $i=p-l+1,\ldots,p$ are reduced to those for $Z$ and $Y$ respectively. Moreover, for $i=n+1,\ldots,p-l$ the drift part is just $\alpha-n+l$ which is zero as we have assumed.

Finally, we show that $X=(X_1,\ldots,X_p)$ has collisions after the start. Note that since $n<n^*$ and $l<p-n^*$ then $n+l<p$. If $n+l<p-1$, then there are at least two zero particles, i.e. they collide for every $t>0$. If $n+l=p-1$, i.e. we have exactly one particle constantly equal to zero, then 
$\alpha^+<n+1$ or $\alpha^{-}<l+1$. Indeed, if $\alpha^+=\alpha+p-n\geq n+1$ and $\alpha^{-}=p-\alpha-l\geq l+1$, then summing these inequalities we get $2p-2(n+l)\geq 2$. Thus $X_n$ or $X_{p-l+1}$ hits zero with probability $1$, i.e. we have a collision between one of these particles and $X_{n+1}\equiv 0$. This ends the proof. 
\end{proof}

\begin{proof}[Proof of Theorem 3]
If $\alpha\geq p-1$ then, by Theorem \ref{thm:SDEs:general}, there exists unique strong solution, which is non-colliding (by Theorem \ref{thm:SDEs:NC:unicity}). Moreover, by  Theorem \ref{thm:poly}, the product $e_p$ of the particles is the time-changed one-dimensional squared Bessel process of non-negative index $\alpha-p+1$ starting from non-negative point. Consequently, it remains non-negative and since the particles are separate after the start it implies that the solution is non-negative. Moreover, for $\alpha\in \{0,1,\ldots,p-2\}$ and $\rk(x)\leq \alpha$, the non-negative solution was also constructed in \cite{bib:b89}, see also \cite{bib:gmm:2017}. Note that one can construct such solution in the same way as in the proof of the previous theorem by letting $l=0$.

Assume that there exists a non-negative solution $(X_1,\ldots,X_p)$ for $\alpha<p-1$ but not in $\{0,1,\ldots,p-2\}$  or $\alpha\in \{0,1,\ldots,p-2\}$ but $\textrm{rk}(x_0)>\alpha$. Then there are at least $\alpha+1$ particles different from $X_1$ on some positive time interval $[0,T]$, $T>0$. In the first case we have only non-colliding solution, so all the particles are different, in the other case we just use the continuity of the paths. In both cases the drift of $X_1$ can be estimated as follows
\begin{eqnarray*}
  \textrm{drift}(X_1) = \alpha+\sum_{j=2}^p\frac{|X_1|+|X_j|}{X_1-X_j}\ind_{\{X_i\neq X_j\}}\leq \alpha-(\alpha+1)\leq -1\/.
\end{eqnarray*}
Here we used the simple inequality $(|x|+|y|)/(x-y)\leq -1$ valid for every $x<y$. Consequently, by the comparison theorem and the fact that $BESQ^{(-1)}{(X_1(0))}$ becomes strictly negative on every time interval with positive probability we get a contradiction with our initial assumption that $X_1$ is non-negative.

Thus, it remains to show that for $\alpha\in \{0,1,\ldots,p-2\}$ and $\rk(x)\leq \alpha$ the solution is unique among non-negative solutions.
We show that the first $p-\alpha$ particles of non-negative solutions must stay at zero. Indeed, if at any time there are more than $\alpha$ particles different from $X_1$, then we go back to the above-described situation (the rank of the starting point is too large) and using Strong Markov Property we can conclude that the solution becomes negative with positive probability. Consequently $X_1(t)=\ldots=X_{p-\alpha}(t)$ for every $t\geq 0$. Moreover, if $X_1$ becomes non-zero at some time, then by results of \cite{bib:gm2}, the solution immediately becomes non-colliding and there are $p-1$ particles different from $X_1$. Once again, by Strong Markov Property, we get that $X_1$ becomes negative with positive probability. Finally, knowing that $X_1(t)=\ldots=X_{p-\alpha}(t)=0$ for every $t$, the equations for the remaining $X_{p-\alpha+1}, \ldots,X_{p}$ are 
\begin{eqnarray*}
   dX_i = 2\sqrt{|X_i|}dB_i + \left(p+\sum_{\stackrel{j=p-\alpha+1,\ldots,p}{j\neq i}}\frac{|X_i|+|X_j|}{X_i-X_j}\ind_{\{X_i\neq X_j\}}\right)dt\/,\quad i=p-\alpha+1,\ldots,p\/.
\end{eqnarray*}
Note that  this is just the system of SDEs describing $\tilde{p}=\alpha$ particles with index $\tilde{\alpha}=p$. Since $\tilde{\alpha}>\tilde{p}+1$ there exists unique non-negative solution, which ends the proof. 
\end{proof}

\section{The structure of \pg{non-colliding systems} $BESQ_{nc}^{\pg{(\alpha)}}(x_1,\dots,x_p)$}
\label{sec:structure}
G\"oing-Jaeschke and Yor in \cite{bib:gyj} studied the structure of squared Bessel processes with negative indices. They showed that $BESQ^{(-\alpha)}(x)$ starting from positive $x$ with $\alpha>0$ hits zero almost surely and then behaves as $-BESQ^{(\alpha)}(0)$. In this section we will study the corresponding problem for \pg{non-colliding} squared Bessel particles systems $BESQ_{nc}^{(\alpha)}(x_1,\ldots,x_p)$. The negativity of the index in the classical case is translated to the condition $\alpha<p-1$ and we assume that $0\leq x_1\leq \ldots\leq x_p$. 
We define the family of first hitting times
\begin{eqnarray*}
   T^{(i)}_0 = \inf\{t\geq 0: X_i(t)=0\}\/,\quad i=1,\ldots,p\/.
\end{eqnarray*}
and the family of first entrance times
 \begin{eqnarray*}
   T^{(i)}_{-} = \inf\{t\geq 0: X_i(t)<0\}\/,\quad i=1,\ldots,p\/.
\end{eqnarray*}
In the next proposition we generalize the well-known fact saying that $BESQ^{(\alpha)}(x)$ hits zero whenever $\alpha\in[0,2)$, visits negative half-line for $\alpha<0$ and stays non-positive after first entrance to the negative half-line. We also describe the evolution of the solution between the moments when the succeeding particles become negative.

\begin{theorem}
\label{thm:structure}
  Let $X=(X_1,\ldots,X_p)$ be $BESQ_{nc}^{(\alpha)}(x_1,\ldots,x_p)$, where $0\leq x_1\leq x_2\leq \ldots \leq x_p$ and $\alpha< p+1$. Let 
$n=\left \lceil{\frac{p+1-\alpha}{2}}\right \rceil $. Then 
\begin{eqnarray*}
T_0^{(1)}\leq T_0^{(2)}\leq \ldots\leq T^{(n)}_0<\infty\/,\quad T_0^{(n+1)}=\ldots=T_0^{(p)}=\infty
\end{eqnarray*}
and
\begin{eqnarray*}
T_{-}^{(1)}\leq\ldots\leq  T^{(n-1)}_{-}<\infty\/,\quad T_{-}^{(n)}=\ldots=T_{-}^{(p)}=\infty\/.
\end{eqnarray*}
Moreover, for every $k=1,\ldots,n-1$, on the interval $[T_{-}^{(k)},T_{-}^{(k+1)})$ the \pg{subsystems of particles} $Y_k=(X_1,\ldots,X_{k})$ and $Z_k=(X_{k+1},\ldots,X_p)$ are conditionally independent given $\left(Y_k(T_{-}^{(k)}), Z_k(T_{-}^{(k)})\right)$ and they evolve as $-BESQ_{nc}^{(p-\alpha-k)}$ and $BESQ_{nc}^{(\alpha+k)}$ respectively. 

In particular, if $T_{-}^{(i)}$ is finite then $X_i(t)\leq 0$ for $t\geq T_{-}^{(i)}$, i.e. the particles do not go back to the positive half-line after going below zero.  
\end{theorem}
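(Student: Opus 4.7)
The proof proceeds by induction on the phase $k\in\{0,1,\ldots,n-1\}$, counting the number of particles that have crossed strictly below zero. The engine of the argument is a splitting observation: when $k$ particles are strictly negative and $p-k$ strictly positive, the mixed interaction terms collapse, since $(|X_i|+|X_j|)/(X_i-X_j)=-1$ for $X_i<0<X_j$ (and $+1$ with the roles reversed). Substituting these constants into the drift of \eqref{eq:BESQp:SDE} decouples the system: with $\tilde Y_i:=-X_{k+1-i}$ for $i=1,\ldots,k$, the subsystem $\tilde Y$ satisfies the $BESQ_{nc}^{(p-k-\alpha)}$ SDE on $k$ particles while $Z_k=(X_{k+1},\ldots,X_p)$ satisfies the $BESQ_{nc}^{(\alpha+k)}$ SDE on $p-k$ particles. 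These SDEs are driven by the disjoint families $\{B_1,\ldots,B_k\}$ and $\{B_{k+1},\ldots,B_p\}$, so strong Markov at $T_-^{(k)}$ together with uniqueness (Theorem \ref{thm:SDEs:NC:unicity}) identifies the two subsystems as conditionally independent non-colliding squared Bessel particle systems of the claimed dimensions.

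The transition from phase $k$ to phase $k+1$ is then controlled by the positive subsystem alone. Applying Theorem \ref{thm:poly} to it, the product $e_{p-k}(X_{k+1},\ldots,X_p)$ is, after a time change by its drift coefficient, a one-dimensional $BESQ$ process of dimension $d_k:=(\alpha+k)-(p-k)+1=\alpha+2k-p+1$. The G\"oing-Jaeschke/Yor description of one-dimensional $BESQ$ yields: $d_k<2$ iff the product hits zero in finite time, and $d_k<0$ iff the product crosses strictly below zero upon reaching it; by the choice $n=\lceil(p+1-\alpha)/2\rceil$ these are equivalent to $k\leq n-1$ and $k\leq n-2$ respectively. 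By non-collision and the strict ordering of $Z_k$, the only particle whose sign can change is its smallest element $X_{k+1}$, with $X_{k+2},\ldots,X_p$ remaining strictly positive on a right-neighborhood of the transition by continuity. This yields $T_0^{(k+1)}<\infty$ for every $k\leq n-1$ and $T_-^{(k+1)}=T_0^{(k+1)}<\infty$ for every $k\leq n-2$, and $T_-^{(n)}=\infty$.

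It remains to verify persistence ($X_i\leq 0$ for every $t\geq T_-^{(i)}$) and the terminal assertion $T_0^{(i)}=\infty$ for $i>n$. At each phase $1\leq k\leq n-1$, the negative subsystem $\tilde Y$ has $k$ particles of dimension $p-k-\alpha$; from $k\leq n-1<(p+1-\alpha)/2$ one obtains $p-k-\alpha>k-1$, and Theorem \ref{thm:SDEs:Nonnegative}(a) forces $\tilde Y$ to remain non-negative, so $X_1,\ldots,X_k\leq 0$ throughout $[T_-^{(k)},\infty)$. This gives persistence and the ordering $T_-^{(1)}\leq\ldots\leq T_-^{(n-1)}$. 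At the terminal phase $k=n-1$ the product dimension $d_{n-1}$ lies in $[0,2)$ by the definition of $n$, so $X_n$ touches zero but never crosses, and the strict ordering of $Z_{n-1}$ then forces $X_{n+1},\ldots,X_p$ to remain strictly positive. The monotonicity of the $T_0^{(i)}$'s follows from $X_1\leq\ldots\leq X_p$.

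The main difficulty lies at the transition instants themselves: immediately after $T_-^{(k)}$, the particle $X_k$ is only infinitesimally below zero, so the decomposition into two strictly signed groups is a priori fragile. This must be handled through strong Markov combined with the classical fact that a one-dimensional $BESQ^{(d)}$ with $d<0$ crosses zero transversally, ensuring that $X_k$ is strictly negative on a right-neighborhood of $T_-^{(k)}$ and the decoupled SDEs hold there without degeneracies.
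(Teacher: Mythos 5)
Your overall skeleton coincides with the paper's: induction over the number of particles that have crossed zero, decoupling of the two signed subsystems through the identity $(|x|+|y|)/(x-y)=-1$ for $x\le 0\le y$ (and $+1$ with the roles reversed), and identification of the decoupled subsystems via the strong Markov property, disjointness of the driving Brownian motions, and uniqueness from Theorem \ref{thm:SDEs:NC:unicity}. (Incidentally, the ``main difficulty'' you flag at the end is not one: the identity already holds whenever $x\le 0\le y$ with $x\neq y$, so no transversality of the crossing is needed to run the decoupling, and the paper uses it in exactly this form.) Where you genuinely diverge is the engine producing $T_0^{(k+1)}<\infty$ and the crossing/non-crossing dichotomy. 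The paper compares the bottom particle pathwise with a one-dimensional $BESQ^{(\alpha-p+1)}$ driven by the \emph{same} Brownian motion $B_1$, showing via Tanaka's formula that the local time at zero of the difference vanishes, hence $X_1\le\tilde X_1$; this single comparison simultaneously yields the hitting of zero (since $\alpha-p+1<2$), the strict crossing (when $\alpha-p+1<0$), and the persistence $X_1\le\tilde X_1\le 0$ afterwards. You instead try to read all of this off the product $e_{p-k}$ of the positive subsystem viewed as a time-changed one-dimensional $BESQ^{(d_k)}$.

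That substitution has a genuine gap. First, Theorem \ref{thm:poly} represents $e_{p-k}$ as a $BESQ^{(d_k)}$ only after the time change $d\tau=e_{p-k-1}\,dt$, so the implication ``$d_k<2$ hence the product hits zero in finite time'' requires the clock $\int_0^t e_{p-k-1}(s)\,ds$ to diverge a.s.\ on the event that the product never vanishes; you do not address this, and it is not obvious --- if the intrinsic time stays bounded, the time-changed $BESQ$ need not reach zero at all. Since this is precisely the step that delivers $T_0^{(k+1)}<\infty$, one of the main assertions of the theorem, the omission matters. Second, the SDE \eqref{eq:poly:SDE} is stated only up to the first time a particle becomes negative, so the claim ``$d_k<0$ iff the product crosses strictly below zero upon reaching it'' invokes the representation exactly where it ceases to be licensed; it can be salvaged (e.g.\ by a contradiction on the event that no particle ever goes negative), but again only contingent on the divergence of the clock. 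Finally, your persistence argument via Theorem \ref{thm:SDEs:Nonnegative}(a) is sound in spirit but overstated: the decoupled dynamics you apply it to hold only on $[T_-^{(k)},T_-^{(k+1)})$, not ``throughout $[T_-^{(k)},\infty)$,'' so non-positivity of the crossed particles must be propagated phase by phase through the induction. All of these points are repaired at once by the paper's direct comparison with $\tilde X_1=BESQ^{(\alpha-p+1)}(x_1)$, which I recommend adopting for the hitting/crossing/persistence step while keeping your (correct) decoupling and counting of the phases.
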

\begin{remark}
   Note that for given $p$ and $\alpha<p+1$ the number $n=\left \lceil{\frac{p+1-\alpha}{2}}\right \rceil $ is $1$ for $\alpha\in [p-1,p+1)$, $n=2$ for $\alpha\in [p-3,p-1)$ and so on. Consequently, the above-given result states that the $i$th particle hits zero if and only if $p+3-\alpha>2i$ and the $i$th particle visits negative half-line if and only if $p+1-\alpha>2i$.
\end{remark}
\begin{remark}
Since the system becomes non-colliding immediately, we can have $T_0^{(i)}=T_0^{(i+1)}$ or $T_{-}^{(i)}=T_{-}^{(i+1)}$ only if $x_{i}=x_{i+1}=0$. Consequently, if $x_i>0$ or $x_i<x_{i+1}$ then we have strict inequalities between times $T_{0}^{(i)}$ and $T_{0}^{(i+1)}$ (analogously $T_{-}^{(i)}<T_{-}^{(i+1)}$) in the above-given theorem. 
\end{remark}
\begin{proof}[Proof of Theorem \ref{thm:structure}]
Let $(X_1,\ldots,X_p)$ be a non-colliding solution to \eqref{eq:BESQp:SDE} with given Brownian motions $(B_1,\ldots,B_{\pg{p}})$. Bru in \cite{bib:b91} showed that for $\alpha\in\pg{(}p-1,p+1)$, the first particle hits zero almost surely ($T_0^{(1)}<\infty$), but it remains non-negative ($T_{-}^{(1)}=\infty$).

For $\alpha\leq p-1$ we define $\tilde{X}_1$ as a solution to the following SDE
\begin{eqnarray*}
   d\tilde{X}_1 = 2\sqrt{|\tilde{X}_1|}dB_1+(\alpha-p+1)dt
\end{eqnarray*}
starting from $x_1$. This process is $BESQ^{(\alpha-p+1)}(x_1)$ driven by the same Brownian motion as $X_1$. Following the proof of the comparison theorem (see Theorem 3.7, p.394 in \cite{bib:ry99}), \pg{we} notice that the local time at zero $L^0(\tilde{X}_1-X_1)$ vanishes and consequently, using the Ta\pg{n}aka's formula, we can write
\begin{eqnarray*}
   \ex{(X_1-\tilde{X}_1)^+} = \ex \int_0^t \ind_{\{X_1(s)>\tilde{X}_1(s)\}}\left(p-1+\sum_{i=2}^p\frac{|X_1(s)|+|X_i(s)|}{X_1(s)-X_i(s)}\right)ds\leq 0\/.
\end{eqnarray*}
The last inequality follows from a simple observation that $(|x|+|y|)/(x-y)\leq -1$ for $y>x$. Thus $X_1(t)\leq \tilde{X}_1(t)$ for every $t\geq 0$ a.s. This implies that $X_1$ hits zero. Moreover, for $\alpha<p-1$ the process becomes negative ($T_0^{(1)}=T_{-}^{(1)}<\infty$) and remains non-positive for $t>T_0^{(1)}$, because the same holds for the squared Bessel process $\tilde{X}_1$ with negative index $\alpha-p+1$. For $\alpha=p-1$ the process is non-negative (by Theorem \ref{thm:SDEs:general} and \ref{thm:SDEs:Nonnegative}, i.e. the unique non-colliding solution is non-negative), i.e. $T_{-}^{(1)}=\infty$.

To examine the behaviour of the system after the time $T_{-}^{(1)}$ (for $\alpha<p-1$), we define $X_i^{*}(t)=X_i(T_{-}^{(1)}+t)$ and $B^*_i(s) = B_i(T^{(1)}_{-}+s)-B_i(T^{(1)}_{-})$ for $i=1,\ldots,p$. Note that, by strong Markov property, the process $(B_1^*,\ldots,B_p^*)$ is again a $p$-dimensional Brownian motion and in particular $B^*_i$ are independent. Moreover, we have $X^*(0)=0$ and for $t<T_{-}^{(1)}-T_{-}^{(2)}$ we have
\begin{eqnarray*}
  X^*_1(t) &=&  \int_{T^{(1)}_{-}}^{T^{(1)}_{-}+t}2\sqrt{|{X_1(s)}|}dB_1(s)+t\alpha+ \int_{T^{(1)}_{-}}^{T^{(1)}_{-}+t}\sum_{k=2}^p\frac{|X_1(s)|+|X_k(s)|}{X_1(s)-X_k(s)}ds\\
  & = &\int_{T^{(1)}_{-}}^{T^{(1)}_{-}+t}2\sqrt{|{X_1(s)}|}dB_1(s)+(\alpha-p+1)t = \int_0^t 2\sqrt{|X^*_1(s)|}d B^*_1(s)+(\alpha-p+1)t\/,
\end{eqnarray*}
where we used the fact that $(|x|+|y|)/(x-y)=-1$ whenever $x\leq 0\leq y$. Similarly, for $i=2,\ldots,p$ we get
\begin{eqnarray*}
  X^*_i(t) -X^{*}_i(0)& = &\int_{T^{(1)}_{-}}^{T^{(1)}_{-}+t}2\sqrt{|{X_i(s)}|}dB_i(s)+t\alpha
  + \int_{T^{(1)}_{-}}^{T^{(1)}_{-}+t}\sum_{k\neq i}^p\frac{|X_i(s)|+|X_k(s)|}{X_i(s)-X_k(s)}ds\\
  & =& \int_{T^{(1)}_{-}}^{T^{(1)}_{-}+t}2\sqrt{|{X_i(s)}|}dB_i(s)+t(\alpha+1)+\int_{T^{(1)}_{-}}^{T^{(1)}_{-}+t}\sum_{k>1, k\neq i }\frac{|X_i(s)|+|X_k(s)|}{X_i(s)-X_k(s)}ds\\
  & =& \int_0^t 2\sqrt{|X^*_i(s)|}dB^*_i(s)+t(\alpha+1)+\int_{0}^{t}\sum_{k>1, k\neq i }\frac{X^{*}_i(s)+X^{*}_k(s)}{X^*_i(s)-X^*_k(s)}ds\/.
\end{eqnarray*}
Note that the interactions between particles $X_1^*$ and $X_2^*,\ldots,X_p^*$ disappeared from the corresponding drift parts and, consequently, the processes $Y_1=X_1$ and $Z_1=(X_2,\ldots,X_p)$ on $[T_{-}^{(1)},T_{-}^{(2)})$ are conditionally independent, given the starting point $Z_1(T_{-}^{(1)})$. Moreover, $Y_1$ is $-BESQ^{(p-1-\alpha)}(0)$ and $Z_1$ evolves as a non-colliding squared Bessel system of $p-1$ particles with index $\alpha+1$. 

By strong Markov property, we can apply the above-given argument to the system of $p^*=p-1$ particles $(X_2^*,\ldots,X_p^*)$ with index $\alpha^*=\alpha+1$ and show that if $\alpha<p-3$ (\pg{which is equivalent to }$\alpha^*<p^*-1$) then $T_{-}^{(2)}<\infty$. Moreover, after going into $(-\infty,0]$ the second particle becomes invisible (independent) for the non-negative particles, but starts to interact with the first one. Indeed, we have
 \begin{eqnarray*}
  \bar{X}_i(t) -\bar{X}_i(0)& = &\int_0^t 2\sqrt{|\bar{X}_i(s)|}d\bar{B}_i(s)+t(\alpha+2)+\int_{0}^{t}\sum_{k>2, k\neq i }\frac{\bar{X}_i(s)+\bar{X}_k(s)}{\bar{X}_i(s)-\bar{X}_k(s)}ds\/.
\end{eqnarray*}
for $i=3,4,\ldots,p$ and 
\begin{eqnarray*}
  \bar{X}_j(t) &=&  \int_0^t 2\sqrt{|\bar{X}_j(s)|}d \bar{B}_j(s)+(\alpha-p+2)t\/,\quad j=1,2\/,
\end{eqnarray*}
where $\bar{X}(t)=X(T_{-}^{(2)}+t)$ and $\bar{B}(t)=B(T_{-}^{(2)}+t)-B(T_{-}^{(2)})$. 

\pg{We complete the proof by iterating} this procedure.
 When $\alpha$ is small enough the consecutive particles become negative and then the non-negative and non-positive \pg{particle subsystems} evolve independently as  squared Bessel particle systems
 \pg{with appropriate drift parameters}.

\end{proof}




\begin{thebibliography}{00}


\bibitem{bib:b89}
M.~F.~Bru,
\emph{Diffusions of perturbed principal component analysis}. 
J. Multivariate Anal. 29 (1989), no. 1, 127-136. 

\bibitem{bib:b91}
M.~F.~Bru,
\emph{Wishart processes.}
J. Theor. Prob. 4 (1991) 725--751.

\bibitem{bib:donatiyor} 
C. Donati-Martin,Y.  Doumerc, H. Matsumoto, M. Yor, 
\emph{Some properties of the Wishart processes and a matrix extension of the Hartman-Watson laws}.  
Publ. Res. Inst. Math. Sci. 40 (2004), no. 4, 1385-1412. 

\bibitem{bib:gyj}
A.~G\"oing-Jaeschke, M.~Yor,
\emph{A survey and some generalizations of Bessel processes.}
Bernoulli 9 (2003), no. 2, 313-349.

\bibitem{bib:gm1}
P. Graczyk, J. Ma\l{}ecki, 
\emph{Multidimensional Yamada-Watanabe theorem and its applications to particle systems.}
J, Math. Phys. 54  (2013), 021503, 15pp.

\bibitem{bib:gm2}
P. Graczyk, J. Ma\l{}ecki, 
\emph{Strong solutions of non-colliding particle systems.}
Electron. J. Probab. 19 (2014), no. 119, pp. 1-21.

\bibitem{bib:gmm:2017}
P. Graczyk, J. Ma\l{}ecki, E. Mayerhofer,
\emph{Characterizations of Wishart processes and Wishart distributions.}
to appear in Stoch. Processes Appl. (2017).

\bibitem{bib:katori2016}
M. Katori,
\emph{Bessel processes, Schramm-Loewner evolution, and the Dyson model.}
Springer, Tokyo, 2016.

\bibitem{bib:katori2004}
M. Katori and H. Tanemura,
\emph{Symmetry of matrix-valued stochastic processes and non-colliding diffusion particle systems.}
Journal of Mathematical Physics, 45 (8) (2004) 3058--3085.

\bibitem{koc} W. K\"onig, N. O'Connell
\emph{Eigenvalues  of  the  Laguerre  process  as  noncolliding  squared  Bessel  process.}
Elect. Comm. in Probab. 6 (2001) 107-114.

\bibitem{bib:ry99}
D.~Revuz, M.~Yor
\emph{Continuous martingales and Brownian motion.}
Springer, New York, 1999.

\bibitem{bib:DiscreteMath}  
K.~A.~Ross and C.~R.~B.~Wright
\emph{Discrete Mathematics} (5th ed.) Prentice Hall, 2003.

\bibitem{bib:yw71}
T.~Yamada, S.~Watanabe
\emph{On the uniqueness of solutions of stochastic differential equations.}
J. Math. Kyoto Univ. 11 (1971) 155--167.

\end{thebibliography}
\end{document}